\newtheorem{thm}{Theorem}[section]
\newtheorem{lem}[thm]{Lemma}
\newtheorem{cor}[thm]{Corollary}
\def\G{{\mathbb G}}
\def\P{{\mathbb P}}
\def\Q{{\mathbb Q}}
\def\Z{{\mathbb Z}}
\def\cL{{\mathcal L}}
\def\cO{{\mathcal{O}}}
\def\cQ{{\mathcal{Q}}}
\def\Q{{\mathbb{Q}}}
\def\G{{\mathbb{G}}}
\def\operatorname#1{\mathop{\rm #1}\nolimits}
\def\Proj{\operatorname{Proj}}
\def\Pic{\operatorname{Pic}}
\def\qed{\hspace{\fill}$\rule{2mm}{2mm}$}
\begin{document}

\title{Rank two Fano bundles on $\G(1,4)$}

\author{Roberto Mu\~noz}
\address{Departamento de Matem\'atica Aplicada, ESCET, Universidad
Rey Juan Carlos, 28933-M\'ostoles, Madrid, Spain}
\email{roberto.munoz@urjc.es}
\thanks{First and third authors partially supported by the Spanish government
project
  MTM2009-06964. Second author partially supported by  MIUR (PRIN project:
Propriet\`a
geometriche delle variet\`a reali e complesse)}
\author{Gianluca Occhetta} \address{Dipartimento di Matematica,
  Universit\`a di Trento, Via Sommarive 14,
I-38123, Povo (Trento), Italy}
\email{gianluca.occhetta@unitn.it}

\author{Luis E. Sol\'a Conde}
\address{Departamento de Matem\'atica Aplicada, ESCET, Universidad
Rey Juan Carlos, 28933-M\'ostoles, Madrid, Spain}
\email{luis.sola@urjc.es}

\subjclass[2010]{Primary 14J60; Secondary 14M15,14J45}

\keywords{Vector bundles, Grassmannians, Fano manifolds}

\begin{abstract} We classify rank two Fano bundles over the Grassmannian of lines $\G(1,4)$. In particular
we show that the only non-split rank two Fano bundle  over  $\G(1,4)$ is, up to a twist, the universal quotient bundle $\cQ$. This completes the classification of rank two Fano bundles over Grassmannians of lines.
\end{abstract}

\maketitle

\section{Introduction}\label{sec:intro}

The problem of classifying low rank vector bundles on grassmannians appears naturally in the framework of Hartshorne's Conjecture. 

On one hand one may consider finite morphisms from the grassmannian to the projective space and take pull-backs of vector bundles via these morphisms. Simple computations on Chern classes will discriminate which vector bundles on the grassmannian can appear as a pull-back of one on a projective space, relating both classifications: that of low rank vector bundles on the grassmannian and that of low rank vector bundles on the corresponding projective space. Let us focus on the codimension two case where Hartshorne's conjecture can be stated as follows: any rank two vector bundle on $\P^6$ decomposes as the direct sum of two line bundles. By the previous ideas this conjecture would follow from the fact that any rank two vector bundle on the grassmannian of lines in $\P^4$, $\G(1,4)$, either decomposes as a sum of line bundles or is, up to a twist, isomorphic to the universal quotient bundle $\cQ$.
This path has been followed in a number of papers, see for instance \cite{AG}, \cite{AM}, \cite{M1}, \cite{M3} and \cite{O}, where the authors study extensions to $\G(1,4)$ of Horrocks decomposability criterion, that had been previously shown to work on projective spaces (\cite{Ho}) and quadrics (\cite{M2}). In the case of $\G(1,4)$ these results show essentially how the vanishing of certain cohomology groups characterizes decomposable bundles and twists of the universal quotient bundle $\cQ$. Note that a straightforward computation of Chern classes shows that no bundle on $\P^6$ may be pulled-back to $\cQ$ or their twists.

On the other hand one may consider the problem of classifying low rank vector bundles on other Fano manifolds of Picard number one and, in particular, on grassmannians, as a natural extension of the decomposability question on vector bundles on the projective space. In this setting, it is known that some partial results may be achieved under certain positivity conditions:  for instance, rank two {\it Fano bundles} over projective spaces and quadrics are completely classified (see \cite{SW1,SW,SSW,APW}). Furthermore, it has been noted by Malaspina in \cite
{M2} that this classification provides precisely the complete list of rank two bundles on projective spaces and quadrics satisfying the decomposability criteria that we have referred to above.


In our recent paper \cite{MOS} we classified rank two Fano bundles over  $\G(1,n)$ with $n \ge 5$, proving that they are twists of the universal quotient bundle $\cQ$ or sums of line bundles (\cite[Corollary 5.17]{MOS}). Our proof relied on showing that the restriction of a Fano bundle $E$ on $\G(1,n)$ to a $\P^{n-1} \subset \G(1,n)$,
representing lines through a fixed point, is a sum of line bundles; this allows us to conclude by using a classification of {\it uniform} (i.e, whose restriction to every line is the same) vector bundles on grassmannians (see \cite[Th\'eor\`eme 1]{G} or \cite[Theorem 4.1]{MOS}). Note that the restriction of $E$ to a $\P^{n-1}$ is not necessarily Fano; however it yet verifies a weaker positivity condition, that we call
$1$-Fano (see \cite[Definition 5.1]{MOS}), from which we infer the splitting (\cite[Theorem 5.15]{MOS}) for $n \geq 5$. Unfortunately there are well known examples of indecomposable $1$-Fano bundles on $\P^3$, including the null-correlation bundle ($c_1=0$, $c_2=1$) and the stable bundles with $c_1=0$, $c_2=2$ (\cite[Rem.~9.4.1]{Ha}). This prevents our arguments for $\G(1,n)$ from working in the case $n=4$. Note also that the cases $n=2,3$ follow from the classification of Fano bundles on projective spaces and quadrics, so that, at this point, the only grassmannian of lines that could eventually support a different Fano bundle was $\G(1,4)$. In this note we prove the following:

\begin{thm}\label{thm:main}
Let $E$ be a rank two Fano bundle on $\G(1,4)$; then $E$ is either a twist of the universal quotient bundle or a direct sum of two line bundles.
\end{thm}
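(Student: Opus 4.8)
The plan is to understand the structure of a rank two Fano bundle $E$ on $\G(1,4)$ by studying its behaviour on distinguished subvarieties, in particular on the two families of linear subspaces: the $\P^3$'s parametrizing lines through a fixed point and the $\P^2$'s parametrizing lines inside a fixed plane. As recalled in the introduction, the essential obstruction coming from the previous work \cite{MOS} is that the restriction of $E$ to a $\P^3 \subset \G(1,4)$ is only known to be $1$-Fano, and on $\P^3$ there exist indecomposable $1$-Fano bundles (the null-correlation bundle and the stable bundles with $c_1=0$, $c_2=2$). Hence the first step I would take is to normalize $E$ by a twist so that $c_1(E) \in \{0,-1\}$, and then argue that the restriction $E_{|\P^3}$ must actually be one of a very short list of possibilities, using the Fano condition on all of $\G(1,4)$ rather than just the weaker $1$-Fano condition on the $\P^3$.

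Concretely, the key steps I would carry out are as follows. First I would exploit the second family of linear subspaces: restricting $E$ to the $\P^2$'s of lines in a fixed plane gives a family of rank two bundles, and the Fano condition on $\G(1,4)$ should force enough positivity to pin down their splitting type. The aim is to show that $E$ is \emph{uniform}, i.e. that its restriction to every line in $\G(1,4)$ has the same splitting type, by playing the two families of linear spaces against each other: a line in one of the $\P^3$'s and a line in one of the $\P^2$'s together generate the relevant cone of curves, so controlling the splitting on both families controls it on all lines. Once uniformity is established, the second main step is to invoke the classification of uniform rank two bundles on Grassmannians (\cite[Th\'eor\`eme 1]{G} or \cite[Theorem 4.1]{MOS}): this already restricts $E$ to a finite list, essentially sums of line bundles, twists of $\cQ$, and possibly twists of the universal subbundle or its dual. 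The final step is to eliminate the unwanted cases using the Fano hypothesis: compute the Chern classes and the relevant restriction of the anticanonical-type positivity, and rule out every candidate except $\cO \osum \cO$ (up to twist) and $\cQ$ (up to twist).

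I expect the main obstacle to be precisely the exclusion of the indecomposable $1$-Fano candidates on the $\P^3$ of lines through a point, since these are exactly the bundles that made the uniform $n \geq 5$ argument break down at $n=4$. The crucial new input must be a genuinely global constraint on $\G(1,4)$ that these candidates fail. I would look for this constraint by restricting $E$ to a sub-Grassmannian $\G(1,3) \iso$ a four-dimensional quadric $Q_4 \subset \G(1,4)$ (the lines inside a fixed hyperplane $\P^3 \subset \P^4$): on $Q_4$ the classification of Fano bundles on quadrics is available (\cite{APW}), and a null-correlation or stable $c_2=2$ bundle on a $\P^3$ sitting inside this quadric should be incompatible with the restricted bundle being Fano on $Q_4$. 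Assembling the splitting data from the sub-Grassmannian $Q_4$, from the $\P^3$'s of lines through a point, and from the $\P^2$'s of lines in a plane, and checking their mutual compatibility via the two contractions of the flag manifold of point-line incidences, is where the real work lies; the rest reduces to Chern class bookkeeping and the cited uniform classification.
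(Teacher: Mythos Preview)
Your proposal is a plan rather than a proof, and the one concrete new idea you offer --- restricting $E$ to a sub-Grassmannian $\G(1,3)\cong Q_4$ and invoking the classification of rank two Fano bundles on quadrics \cite{APW} --- has the same defect as the $\P^{n-1}$ argument you are trying to repair. The canonical bundle of $Q_4$ is $\cO(-4)$, so for $E_{|Q_4}$ to be Fano one needs the $\Q$-twist $E_{|Q_4}((4-e)/2)$ to be ample, whereas the Fano hypothesis on $\G(1,4)$ only gives ampleness of $E_{|Q_4}((5-e)/2)$. Thus $E_{|Q_4}$ is again merely $1$-Fano, not Fano, and the APW classification does not apply. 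Moreover, the $\P^3$'s in the class $\Omega(0,4)$ (lines through a point) are not contained in any such $Q_4$; their intersection with a $Q_4$ is only a $\P^2$, so even if you knew $E_{|Q_4}$ completely, this would not directly tell you whether $E_{|\P^3}$ is a null-correlation bundle. The phrase ``playing the two families against each other'' to deduce uniformity is exactly the step that needs a genuine argument, and you have not supplied one.

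The paper proceeds quite differently and avoids any direct attempt to prove uniformity from the outset. It first uses positivity of the Chern classes and of Schur polynomials of the ample $\Q$-twist $E((5-e)/2)$, restricted to cycles $\Omega(0,4)$, $\Omega(1,2)$, $\Omega(1,3)$, together with Riemann--Roch integrality (Schwarzenberger-type conditions computed with the Schubert package) and Griffiths vanishing, to cut the possible Chern classes $(e,a,b)$ down to a list of nine triples. Each surviving case is then dispatched individually: Le~Potier vanishing forces $h^0(E_{|\P^3}(j))\geq\chi(E_{|\P^3}(j))$ for $j\geq -1$, and Riemann--Roch makes the right-hand side positive in the relevant cases, producing sections whose zero loci are analysed via the lemmas in Section~\ref{sec:splitting} (effectiveness of $c_2(E(j))$, adjunction on the zero curve, control of splitting types on lines). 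Uniformity is only invoked at the very end, once the numerics already force the splitting type, and the classification of uniform bundles \cite{G,MOS} closes each case. The moral is that the missing global constraint you were looking for is numerical (Chern class bounds plus Riemann--Roch plus vanishing), not geometric compatibility between the two families of linear spaces.
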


Note that a vector bundle $\cO(a)\oplus\cO(b)$ on $\G(1,n)$ is Fano if and only if $|a-b|< n+1$, hence, up to a twist with a line bundle, the list of Fano bundles on $\G(1,n)$ is finite for all $n$. In the case $n=4$, split Fano bundles are twists of one of the following:
$$
\cO^{ \oplus 2},\,\cO(-1) \oplus \cO(1),\,\cO(-2) \oplus \cO(2),\,\cO(-1) \oplus \cO,\,\cO(-2) \oplus \cO(1).
$$

Our proof  of Theorem \ref{thm:main} does not involve a classification of $1$-Fano bundles on $\P^3$, which to our best knowledge is still unknown. We rather consider the restriction of $E$ to additional subvarieties in different cohomology classes and use the techniques of \cite{MOS} (positivity of Schur polynomials, Schwarzenberger conditions, Riemann-Roch combined with vanishing theorems) to compute a manageable list of possible Chern classes of $E|_{\P^3}$ (with the help of the Maple package Schubert \cite{S}). At this point a case by case analysis of $E|_{\P^3}$ finishes the proof.

\subsection{Notation}\label{ssec:not}
Along this paper $\G(1,4)$ will denote the Grassmann variety parametrizing lines in the complex projective space of dimension $4$, and we will consider vector bundles $E$ of rank two on $\G(1,4)$. Given an integer $j$, we will denote by $E(j)$ the twist of $E$ with the $j$-th tensor power of the ample generator of $\Pic(\G(1,4))$.

Given integers $i,j$ such that $0\leq i<j\leq 4$, we will denote by $\Omega(i,j)$ the cohomology class of the subscheme of $\G(1,4)$ parameterizing lines contained in a linear subspace $P_j\subset\P^4$ of dimension $j$ and meeting a linear subspace $P_i\subset P_j$ of dimension $i$. Since $H^2(\G(1,4),\Z) \simeq \mathbb Z\langle\Omega(2,4) \rangle$  and  $H^4(\G(1,4), \mathbb Z)= \mathbb Z \langle \Omega(1,4) \rangle \oplus \mathbb Z\langle  \Omega(2,3) \rangle$ we will denote by $e$ and by $(a,b)$ the first and second Chern class of $E$, respectively. That is to say \begin{equation}\label{chernnotation} c_1(E)= e\, \Omega(2,4) \textrm{ and } c_2(E)= a\, \Omega(1,4) + b\, \Omega(2,3).\end{equation}  We will always assume, up to twist with a line bundle,  that $E$ is normalized, i.e. that $e=0,-1$. \\
 Let $X:=\P(E)$ be the projectivization of $E$, that is
$$
\P(E)=\Proj\left(\bigoplus_{k\geq 0}S^kE\right),
$$
 with projection $\pi: X \to \G(1,4) $. Denote by $H$ the pullback of the ample generator of $\Pic(\G(1,4))$ and by $L$ the class of the tautological line bundle $\cO(1)$ of $X$. The anticanonical bundle of $X$ is given by
$$\cO(-K_{X})=\cO(2L+ (5-e)H).$$
We will assume that $E$ is {\it Fano}, i.e. that $-K_X$ is ample. Equivalently, the $\Q$-twisted bundle $E((5-e)/2)$ is ample.

\section{Existence of sections and splitting}\label{sec:splitting}

Let $\ell$ be a line in $\G(1,4)$. By the ampleness of $-K_X$ we have that the possible splitting types of $E$ on $\ell$ are
\begin{equation}\label{splittingtypes}
(-2,2), (-1,1), (0,0) \textrm{ if }  e=0, \mbox{ and }
(-2,1), (-1,0) \textrm{ if } e=-1.
\end{equation}
In particular we obtain lower bounds for the set $\left\{k\in\Z|\,H^0(\G(1,4),E(k))\neq 0\right\}$. Later on we will make use of the following statement:


\begin{lem}\label{lemma:-2}
If $H^0(\G(1,4),E(-2))\neq 0$ then $E$ splits as $\cO(-2) \oplus \cO(2)$. The same is true if the condition is fulfilled by the restriction of $E$ to a general $\P^3$ in the cohomology class $\Omega(0,4)$.
\end{lem}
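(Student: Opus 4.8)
The plan is to turn the hypothesis into a sub-line-bundle of $E$ and then prove that this subsheaf is in fact a subbundle, i.e. that the associated section vanishes nowhere. First I would pin down the first Chern class. A nonzero global section is nonzero at a general point, hence restricts nontrivially to every line through that point; so $H^0(\G(1,4),E(-2))\neq 0$ forces $H^0(\ell,E(-2)|_\ell)\neq 0$ for the general line $\ell$. By \eqref{splittingtypes}, when $e=-1$ every splitting type has top degree at most $1$, so $E(-2)|_\ell$ has both summands negative and no line can carry a section; thus $e=0$. Among the $e=0$ types only $(-2,2)$ survives the twist, since $(-1,1)$ and $(0,0)$ give $E(-2)|_\ell$ with both summands negative. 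Hence the general line has type $(-2,2)$, and the same computation shows $H^0(\G(1,4),E(k))=0$ for all $k\leq -3$.

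A section $s$ then gives an injection $s\colon \cO(2)\hookrightarrow E$. Its zero locus $Z(s)$ cannot contain a divisor $D\in|\cO(d)|$ with $d\geq 1$: otherwise $s$ would factor through $\cO(2)\otimes\cO(-D)$, producing a nonzero section of $E(-2-d)$ with $-2-d\leq -3$, contradicting the vanishing just established. Therefore $\codim Z(s)\geq 2$ and we have
\[
0\longrightarrow \cO(2)\longrightarrow E\longrightarrow \cI_{Z(s)}(-2)\longrightarrow 0.
\]
Now comes the key dichotomy. For any line $\ell$, if $s|_\ell\not\equiv 0$ then $\ell$ has type $(-2,2)$ and $s|_\ell$ generates the $\cO_\ell$-summand of $E(-2)|_\ell=\cO_\ell\oplus\cO_\ell(-4)$, so $s|_\ell$ is nowhere zero and $\ell\cap Z(s)=\emptyset$; if $s|_\ell\equiv 0$ then $\ell\subseteq Z(s)$. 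Thus every line meeting $Z(s)$ is contained in $Z(s)$, and likewise every line meeting its complement $U$ is contained in $U$. Since $\G(1,4)$ is connected by chains of lines, $Z(s)$ is either empty or all of $\G(1,4)$; as $\codim Z(s)\geq 2$, it is empty. Hence $s$ is a nowhere-vanishing subbundle inclusion and $E$ is an extension of $\cO(-2)$ by $\cO(2)$, which splits because $\Ext^1(\cO(-2),\cO(2))=H^1(\G(1,4),\cO(4))=0$ by Kodaira vanishing.

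For the second statement, write $\Sigma_p\cong\P^3$ for the $\P^3\subset\G(1,4)$ in class $\Omega(0,4)$ parametrizing the lines through a fixed point $p\in\P^4$. Lines of $\G(1,4)$ contained in $\Sigma_p$ are exactly the lines of this linear $\P^3$, so their $E$-splitting types still obey \eqref{splittingtypes}; since $\Sigma_p$ is itself connected by lines, the entire argument above applies verbatim on $\Sigma_p$ and yields $E|_{\Sigma_p}\cong\cO_{\P^3}(-2)\oplus\cO_{\P^3}(2)$ for general $p$. The hard part will be the globalization, and I would carry it out by showing that the restriction $H^0(\G(1,4),E(-2))\to H^0(\Sigma_p,E|_{\Sigma_p}(-2))$ is surjective, so that the nonzero section on $\Sigma_p$ lifts and the first part finishes the proof. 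Since $\Sigma_p$ is the zero locus of the tautological section of $\cQ$ determined by $p$, its ideal sheaf has the Koszul resolution
\[
0\longrightarrow \wedge^3\cQ^\vee\longrightarrow \wedge^2\cQ^\vee\longrightarrow \cQ^\vee\longrightarrow \cI_{\Sigma_p}\longrightarrow 0,
\]
and the surjectivity reduces to the vanishing of $H^{j+1}(\G(1,4),E(-2)\otimes\wedge^j\cQ^\vee)$ for $j=1,2,3$; these are Fano-type vanishings amenable to the Riemann--Roch-plus-vanishing techniques of \cite{MOS}, and establishing them is the crux of this half. (Alternatively, the splitting on the general $\Sigma_p$ forces the general line to have type $(-2,2)$, so the maximal destabilizing sub-line-bundle of $E$ has degree $2$ and restricts to the positive summand on the general line; this produces a saturated inclusion $\cO(2)\hookrightarrow E$, hence a global section of $E(-2)$, and the first part concludes.)
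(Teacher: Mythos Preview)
Your treatment of the first assertion is correct and genuinely different from the paper's. The paper argues that a nonzero section of $E(-2)$ forces the generic line to have type $(-2,2)$, and then combines upper semicontinuity of $h^0(\ell,E(-2)|_\ell)$ with the list \eqref{splittingtypes} to conclude that \emph{every} line has this type; uniformity then yields the splitting via \cite[Th\'eor\`eme~1]{G} or \cite[Theorem~4.1]{MOS}. Your dichotomy-plus-line-connectedness argument is a pleasant self-contained alternative that avoids invoking the classification of uniform bundles, at the price of a slightly longer computation.

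For the second assertion, however, there is a genuine gap. You correctly obtain $E|_{\Sigma_p}\cong\cO_{\P^3}(-2)\oplus\cO_{\P^3}(2)$ for general $p$, but neither globalization is completed. In the Koszul approach the required vanishings (which, incidentally, should be $H^{j}\big(E(-2)\otimes\wedge^{j}\cQ^\vee\big)=0$ for $j=1,2,3$, not $H^{j+1}$) are declared ``amenable'' but not verified; the case $j=3$ is $H^3(E(-3))$ and does follow from Serre duality and Le~Potier, but $j=1,2$ involve $E(-2)\otimes\cQ^\vee$ and $E(-2)\otimes\wedge^2\cQ^\vee$, for which no vanishing theorem available in the paper applies directly. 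In the alternative route, the claim that the maximal destabilizing subsheaf has degree exactly $2$ is not justified: you can get $d\le 2$ from $H^0(E(-3))=0$, but excluding $d=1$ amounts to knowing that the Harder--Narasimhan filtration of $E$ restricts to that of $E|_\ell$ on the general line, which you neither prove nor cite. The paper bypasses all of this: once the general line has type $(-2,2)$ (which already follows from $H^0(\Sigma_p,E|_{\Sigma_p}(-2))\neq 0$ for general $p$), semicontinuity together with \eqref{splittingtypes} immediately forces uniformity, and the uniform classification finishes both halves of the lemma in one stroke.
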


\begin{proof} Since $H^0(\ell,E(-2)_{|\ell})= 0$ for every line $\ell$ on which $E$ has splitting type different from $(-2,2)$, the existence of a non-zero global section of $E(-2)$ implies that $(-2,2)$ is the splitting type of $E$ on the general line of $\G(1,4)$. But then semicontinuity, together with (2) above, tells us that this is in fact the splitting type of $E$ on every line of the Grassmannian. Then $E$ is uniform and its splitting follows from \cite[Th\'eor\`eme 1]{G} or \cite[Theorem 4.1]{MOS}.

Note that, by (\ref{splittingtypes}) the uniformity follows if we had that the splitting type of $E$ at a general line is $(-2,2)$. Then the same proof works if we assume that $H^0(\P^3,E_{|\P^3}(-2))\neq 0$ for a general $\P^3$.
\end{proof}

Now we will translate the effectiveness of $c_2(E(j))$ into some numerical conditions which the integers $e,a,b$, defined in (\ref{chernnotation}), must satisfy:

\begin{lem}\label{lemma:posc3} Assume that for some integer $j$
\begin{equation}\label{pure}
 H^0(\G(1,4),E(j-1)) =  0 \quad\mbox{ and }\quad H^0(\G(1,4),E(j)) \not =  0.
\end{equation}
Then  $a+j(e+j) \geq 0$ and $b+j(e+j)\geq 0$, and $E\cong\cO(-j)\oplus\cO(e+j)$ if and only if $a+j(e+j) = b+j(e+j) =0$.\\
Moreover  if  condition {\rm (\ref{pure})} is fulfilled by the restriction of $E$ to a $\P^3$ in the cohomology class $\Omega(0,4)$ then $a+j(e+j) \geq 0$, and equality holds if and only if the restriction of $E$ to such $\P^3$ splits as $\cO_{\P^3}(-j)\oplus\cO_{\P^3}(e+j)$.\end{lem}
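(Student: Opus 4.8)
The plan is to attach a Serre-type construction to a global section of $E(j)$ and to read off the inequalities from the effectivity of its zero locus. First I would observe that a nonzero section $s\in H^0(\G(1,4),E(j))$ cannot vanish along a divisor: otherwise $s$ would factor through $E(j-1)\hookrightarrow E(j)$, contradicting $H^0(\G(1,4),E(j-1))=0$. Hence the zero scheme $Z=(s)_0$ has pure codimension two (it is a local complete intersection, $E$ being of rank two), and $s$ yields an exact sequence
$$
0\to\cO(-j)\to E\to\cI_Z(e+j)\to 0.
$$
Taking second Chern classes gives $[Z]=c_2(E(j))$, and using the Pieri relation $\Omega(2,4)^2=\Omega(1,4)+\Omega(2,3)$ together with the twisting formula $c_2(E(j))=c_2(E)+j(e+j)\,\Omega(2,4)^2$, I would compute
$$
[Z]=\bigl(a+j(e+j)\bigr)\Omega(1,4)+\bigl(b+j(e+j)\bigr)\Omega(2,3).
$$

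Next, since $Z$ is an effective cycle, both of its Schubert coefficients are nonnegative: each coefficient equals the intersection number of $[Z]$ against the complementary (dual) Schubert class, which is represented by a Schubert variety and hence pairs nonnegatively with an effective cycle (positivity of Schur polynomials). This yields $a+j(e+j)\ge 0$ and $b+j(e+j)\ge 0$. For the splitting criterion, $[Z]=0$ forces $Z=\emptyset$ (an effective codimension-two cycle of class zero is empty), so $s$ is nowhere vanishing and $E$ sits in an extension $0\to\cO(-j)\to E\to\cO(e+j)\to 0$. This extension is classified by $\Ext^1(\cO(e+j),\cO(-j))=H^1(\G(1,4),\cO(-e-2j))$, which vanishes since line bundles on $\G(1,4)$ have no intermediate cohomology. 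Thus both coefficients vanish iff $E\cong\cO(-j)\oplus\cO(e+j)$, the reverse implication being the direct Chern class computation for a split bundle.

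For the statement over a general $\P^3$ in the class $\Omega(0,4)$ (lines through a fixed point), I would run the identical argument on $\P^3$. The only new input is the behaviour of cohomology classes under restriction: $\Omega(2,4)$ restricts to the hyperplane class $h$ of $\P^3$, while $\Omega(2,3)_{|\P^3}=0$ (no line through a general fixed point lies in a general hyperplane) and consequently $\Omega(1,4)_{|\P^3}=h^2$. Therefore $c_2(E_{|\P^3}(j))=\bigl(a+j(e+j)\bigr)h^2$, and effectivity of the zero locus of a section of $E_{|\P^3}(j)$ gives $a+j(e+j)\ge 0$, with equality iff that zero locus is empty, iff (using $H^1(\P^3,\cO(k))=0$) the bundle splits as $\cO_{\P^3}(-j)\oplus\cO_{\P^3}(e+j)$.

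The main obstacle I anticipate is the first step combined with the positivity bookkeeping: one must ensure that $H^0(E(j-1))=0$ genuinely forces the zero scheme to have no divisorial component, so that the sequence above is available and $[Z]$ is a bona fide effective codimension-two class; and then that effectivity translates into nonnegativity of \emph{each} coefficient separately in the rank-two group $H^4(\G(1,4),\Z)$, which is precisely where the positivity of Schur polynomials is needed. The $H^1$-vanishing statements are routine (Bott vanishing), and the $\P^3$ case is easier because $H^4(\P^3,\Z)$ has rank one.
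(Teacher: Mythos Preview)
Your argument is correct and follows essentially the same line as the paper's proof: both take a nonzero section of $E(j)$, use $H^0(E(j-1))=0$ to force the zero locus $Z$ into pure codimension two, read off the inequalities from the effectivity of $[Z]=c_2(E(j))$ in the Schubert basis, and split the resulting extension when $Z=\emptyset$ via vanishing of $H^1$ of a line bundle (the paper phrases this as Kodaira vanishing, you as Bott/absence of intermediate cohomology). The treatment of the $\P^3$ case is likewise the same, the paper invoking $\Omega(2,3)\cdot\Omega(0,4)=0$ where you compute the restrictions of the Schubert classes directly.
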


\begin{proof} Let $\sigma$ be a  section of $E(j)$ and let $Z:=\{\sigma = 0\}$ be its zero set. Since $H^0(\G(1,4),E(j-1))=0$, then $Z$ is either empty or a codimension two subvariety of $\G(1,4)$ in the cohomology class $(a+j(e+j))\Omega(1,4) + (b+j(e+j))\Omega(2,3)$. In particular $a+j(e+j), b+j(e+j)\geq 0$ and equalities hold if and only if $Z$ is empty. If  $Z$ is empty, then the cokernel $\cL_\sigma$ of $\sigma:\cO_{G(1,4)}\to E(j)$ is a line bundle, and Kodaira vanishing theorem applied to $\cL_\sigma$ tells us that we have an isomorphism $$E(j)\cong\cO_{G(1,4)}\oplus \cL_\sigma.$$ Conversely, if $E(j)$ has a direct summand $\cO_{\G(1,4)}$, then the inclusion of this subbundle into $E$ provides a section of $E(j)$ with empty zero set.\\
The statement on the restriction is proved in the same way taking into account that the cohomology class of $\P^3$ is $\Omega(0,4)$ and the vanishing of the intersection product $\Omega(2,3)\Omega(0,4)=0$.
\end{proof}

The next trivial lemma will be useful later:

\begin{lem}\label{lem:splitP3}
Consider a $\P^3$ in the cohomology class $\Omega(0,4)$ such that $E_{|\P^3}$ splits as $\cO_{\P^3}(k)\oplus\cO_{\P^3}(r)$.  The pair $(k,r)$ is completely determined by $e$ and $a$. In particular, if for every $\P^3$ in the cohomology class $\Omega(0,4)$ the restricion $E_{|\P^3}$ is a direct sum of line bundles, then $E$ is uniform.
\end{lem}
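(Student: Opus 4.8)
The plan is to read off the splitting type $(k,r)$ from the Chern classes of $E_{|\P^3}$. Write $\iota\colon\P^3\hookrightarrow\G(1,4)$ for the inclusion of a $\P^3$ in the class $\Omega(0,4)$, i.e. the family of lines through a fixed point $p$. Such a $\P^3$ is linearly embedded in the Pl\"ucker embedding, so $\iota^*\Omega(2,4)=h$, the hyperplane class of $\P^3$. If $E_{|\P^3}\cong\cO_{\P^3}(k)\oplus\cO_{\P^3}(r)$, comparing first Chern classes immediately gives $k+r=e$.

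For the second Chern class I would restrict $c_2(E)=a\,\Omega(1,4)+b\,\Omega(2,3)$ along $\iota$. The term $\Omega(2,3)$ restricts to zero: this is the vanishing $\Omega(2,3)\cdot\Omega(0,4)=0$ already used in the proof of Lemma \ref{lemma:posc3}, and geometrically reflects that a line through a general point $p$ cannot be contained in a hyperplane avoiding $p$. For the term $\Omega(1,4)$ I would show $\iota^*\Omega(1,4)=h^2$; by the projection formula this amounts to the intersection number $\Omega(2,4)\cdot\Omega(1,4)\cdot\Omega(0,4)=1$ in $\G(1,4)$, a one-line Schubert-calculus computation. Hence $c_2(E_{|\P^3})=a\,h^2$, that is $kr=a$. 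Therefore $k$ and $r$ are the two roots of $t^2-et+a=0$, and the unordered pair $(k,r)$ depends only on $e$ and $a$, as claimed.

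For the final assertion, suppose $E_{|\P^3}$ splits for every $\P^3$ in the class $\Omega(0,4)$. By the first part the unordered splitting type $(k,r)$ is then the same for all of them, since $e$ and $a$ are fixed invariants of $E$. Now every line of $\G(1,4)$ is a pencil of lines through some point $p$ lying in a fixed plane, and is therefore contained in the $\P^3=\Omega(0,4)$ centred at $p$. Restricting the splitting $\cO_{\P^3}(k)\oplus\cO_{\P^3}(r)$ from this $\P^3$ to the line shows that $E$ has constant splitting type $(k,r)$ on every line, i.e. $E$ is uniform.

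The argument is essentially bookkeeping, consistent with the lemma being trivial; the only two points needing care are the restriction computations $\iota^*\Omega(2,3)=0$ and $\iota^*\Omega(1,4)=h^2$ in $H^4(\P^3)$, handled by Schubert calculus and the projection formula, together with the elementary observation that a line of the Grassmannian, being a pencil through a point, always sits inside one of the $\P^3$'s of class $\Omega(0,4)$.
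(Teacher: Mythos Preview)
Your proof is correct and follows the same approach as the paper: compute $c_1(E_{|\P^3})=e$ and $c_2(E_{|\P^3})=a$ so that $k,r$ are the roots of $t^2-et+a=0$. You supply more detail than the paper on the restriction computations $\iota^*\Omega(2,3)=0$ and $\iota^*\Omega(1,4)=h^2$, and you explicitly justify the uniformity assertion by noting that every line of $\G(1,4)$ lies in some $\P^3$ of class $\Omega(0,4)$, a step the paper leaves implicit.
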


\begin{proof}
If $E_{|\P^3}\cong\cO_{\P^3}(k)\oplus\cO_{\P^3}(r)$, then $$k+r=c_1(E_{|\P^3})=c_1(E)\Omega(0,4)=e,\quad kr=c_2(E_{|\P^3})=c_2(E)\Omega(0,4)=a.$$
Then $k$ and $r$ are the only solutions of the equation $x^2-ex+a=0$, hence they are determined by $e$ and $a$.
\end{proof}

\begin{cor}\label{cor:posc3} If $H^0(\P^3,E_{|\P^3}(-1)) \not =  0$ and $H^0(\P^3,E_{|\P^3}(-2))=0$ for the general $\P^3\subset\G(1,4)$, then $a\geq e-1$ and equality holds if and only if $E$ splits as a sum of line bundles $\cO(1)\oplus\cO(e-1)$.
\end{cor}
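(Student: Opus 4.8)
The plan is to apply Lemma \ref{lemma:posc3} and Lemma \ref{lem:splitP3} in tandem, using the hypotheses on the restriction to a general $\P^3$ in the cohomology class $\Omega(0,4)$. First I would observe that the two cohomology hypotheses, namely $H^0(\P^3,E_{|\P^3}(-2))=0$ together with $H^0(\P^3,E_{|\P^3}(-1))\neq 0$, are precisely condition (\ref{pure}) applied to the restriction $E_{|\P^3}$ with the choice $j=-1$. Indeed, reading (\ref{pure}) for the restriction with $j=-1$ asks exactly that $H^0(\P^3,E_{|\P^3}(-2))=0$ and $H^0(\P^3,E_{|\P^3}(-1))\neq 0$, which matches the corollary's hypotheses verbatim.

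Next I would invoke the ``moreover'' part of Lemma \ref{lemma:posc3}. With $j=-1$ the quantity $a+j(e+j)$ becomes $a+(-1)(e-1)=a-e+1$, so the lemma yields $a-e+1\geq 0$, that is $a\geq e-1$, which is the asserted inequality. The same lemma tells us that equality $a-e+1=0$ holds if and only if the restriction $E_{|\P^3}$ splits as $\cO_{\P^3}(-j)\oplus\cO_{\P^3}(e+j)=\cO_{\P^3}(1)\oplus\cO_{\P^3}(e-1)$. At this point the only remaining gap is to upgrade ``the restriction to this $\P^3$ splits'' to ``the bundle $E$ itself splits,'' which is where Lemma \ref{lem:splitP3} enters.

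The step requiring the most care is the passage from the splitting on a single general $\P^3$ to a global splitting of $E$. Here I would argue that since the splitting type on the general $\P^3$ is constant (the pair $(k,r)=(1,e-1)$ is determined by $e$ and $a$ via Lemma \ref{lem:splitP3}), and since the hypotheses are open conditions satisfied by the general $\P^3$ in the class $\Omega(0,4)$, the restriction $E_{|\P^3}$ is a direct sum of line bundles for every $\P^3$ in this class; the last assertion of Lemma \ref{lem:splitP3} then gives that $E$ is uniform. A uniform rank two bundle on $\G(1,4)$ is classified by \cite[Th\'eor\`eme 1]{G} or \cite[Theorem 4.1]{MOS}, and combining this classification with the known splitting type $(1,e-1)$ forces $E\cong\cO(1)\oplus\cO(e-1)$. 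Conversely, if $E$ splits in this way then clearly $c_2(E)=\cO(1)\cdot\cO(e-1)$ restricts to give $a=e-1$, closing the equivalence.

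I expect the main obstacle to be justifying that the conclusion of Lemma \ref{lemma:posc3} (splitting of the restriction on the particular $\P^3$ carrying a section) is compatible with the genericity built into the corollary's hypotheses, so that Lemma \ref{lem:splitP3} can legitimately be applied to all $\P^3$ in the class at once rather than to just one. The cleanest way around this is to note that splitting of $E_{|\P^3}$ with a fixed splitting type is itself an open and closed condition on the irreducible family of $\P^3$'s in the class $\Omega(0,4)$, so that once it holds on a general member with equality $a=e-1$, it holds on every member, after which uniformity and the classification finish the argument.
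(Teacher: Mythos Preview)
Your derivation of the inequality $a\geq e-1$ and of the splitting of $E_{|\P^3}$ on a \emph{general} $\P^3$ via Lemma~\ref{lemma:posc3} with $j=-1$ is correct and matches the paper. The gap is precisely where you yourself flag it: the passage from splitting on the general $\P^3$ to splitting on \emph{every} $\P^3$. Your proposed fix, that ``splitting of $E_{|\P^3}$ with a fixed splitting type is an open and closed condition,'' is not justified and is in fact false in general: rigidity of $\cO_{\P^3}(1)\oplus\cO_{\P^3}(e-1)$ gives openness, but not closedness. (Already on $\P^1$ the bundle $\cO\oplus\cO$ specializes in flat families to $\cO(1)\oplus\cO(-1)$.) Likewise, the sentence ``the hypotheses are open conditions'' is not right: $h^0(E_{|\P^3}(-1))\neq 0$ is closed and $h^0(E_{|\P^3}(-2))=0$ is open, and it is exactly the failure of the latter on a special $\P^3$ that must be excluded.

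The paper closes this gap by a direct argument rather than an abstract openness/closedness claim. If some special $\P^3$ has $H^0(\P^3,E_{|\P^3}(-2))\neq 0$, then restricting a nonzero section to lines and using the constraint~(\ref{splittingtypes}) coming from the Fano hypothesis (as in the proof of Lemma~\ref{lemma:-2}) forces $E_{|\P^3}$ to be uniform of type $(-2,2)$, hence $E_{|\P^3}\cong\cO_{\P^3}(2)\oplus\cO_{\P^3}(-2)$. But Lemma~\ref{lem:splitP3} says the splitting type on any $\P^3$ is determined by $(e,a)$, and $(2,-2)$ is incompatible with $a=e-1\in\{-1,-2\}$, a contradiction. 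Thus the vanishing $H^0(E_{|\P^3}(-2))=0$ actually holds for \emph{all} $\P^3$, Lemma~\ref{lemma:posc3} applies to each of them, Lemma~\ref{lem:splitP3} gives uniformity, and the classification of uniform bundles finishes the proof. You should replace your open-and-closed claim with this argument.
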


\begin{proof} The first assertion follows directly from Lemma \ref{lemma:posc3}. Assume that $a=e-1$.
By Lemma \ref{lemma:posc3} again, it follows that the restriction of $E$ to a general $\P^3$ splits as $\cO_{\P^3}(1)\oplus\cO_{\P^3}(e-1)$. If this were the case for every $\P^3$, then $E$ would be uniform and we could conclude the splitting of $E$  (\cite[Th\'eor\`eme 1]{G}, \cite[Theorem 4.1]{MOS}). Thus we may assume that there exists a $\P^3$ for which $H^0(\P^3,E_{|\P^3}(-2))\neq 0$. Arguing as in Lemma \ref{lemma:-2}, we get that $E_{|\P^3}$ splits as $\cO_{\P^3}(2)\oplus\cO_{\P^3}(-2)$, contradicting Lemma \ref{lem:splitP3}.
\end{proof}

In order to actually get the existence of sections of a suitable twist of $E$ we will apply Le Potier vanishing Theorem as in the following

\begin{lem}\label{lemma:LP}
If $j \ge -2$ then $h^0(\G(1,4),E(j)) \ge \chi (\G(1,4),E(j))$. The same is true if $j \ge -1$ for the restriction of $E$ to  a $\P^3$ in the cohomology class $\Omega(0,4)$.
\end{lem}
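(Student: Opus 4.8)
The plan is to deduce both inequalities from the vanishing of the higher even cohomology of the appropriate twist. Indeed, $h^0-\chi = h^1-h^2+h^3-\cdots$, so $h^0\ge\chi$ holds as soon as $H^q=0$ for all $q\ge 2$; in that case $\chi=h^0-h^1\le h^0$. The tool producing this vanishing is Le Potier's theorem: for an ample vector bundle $F$ of rank $r$ on a smooth projective variety $Y$ of dimension $n$ one has $H^q(Y,\cO(K_Y)\otimes F)=0$ for $q\ge r$, which for our rank two bundles reads $q\ge 2$.

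First, on $\G:=\G(1,4)$ I would write $E(j)=\cO(K_\G)\otimes E(j+5)$, using that $\cO(K_\G)=\cO(-5)$. The Fano hypothesis states that the $\Q$-twist $E((5-e)/2)$ is ample, and since $e\in\{0,-1\}$ we have $(5-e)/2\le 3$, so adding nonnegative multiples of the nef class $H$ shows that the honest twist $E(k)$ is ample for every integer $k\ge 3$. For $j\ge -2$ we have $j+5\ge 3$, hence $F:=E(j+5)$ is ample and Le Potier yields $H^q(\G,E(j))=0$ for $q\ge 2$; the inequality $h^0\ge\chi$ follows.

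For the restriction I would realize the $\P^3$ as $\Omega(0,4)$, the linearly embedded $\P^3$ of lines through a fixed point, on which $\cO_\G(1)$ restricts to $\cO_{\P^3}(1)$; thus $H$ restricts to the hyperplane class $H'$ of $\P^3$ and $L$ to the tautological class $L'$ of $\P(E_{|\P^3})=\pi^{-1}(\P^3)$. Restricting the ample divisor $-K_X=2L+(5-e)H$ to this subvariety gives that $2L'+(5-e)H'$ is ample, i.e.\ the $\Q$-twist $E_{|\P^3}((5-e)/2)$ is ample, whence $E_{|\P^3}(k)$ is ample for $k\ge 3$. Writing $E_{|\P^3}(j)=\cO_{\P^3}(K_{\P^3})\otimes E_{|\P^3}(j+4)$ with $\cO_{\P^3}(K_{\P^3})=\cO_{\P^3}(-4)$, for $j\ge -1$ we have $j+4\ge 3$, so Le Potier on $\P^3$ gives $H^q(\P^3,E_{|\P^3}(j))=0$ for $q\ge 2$ and again $h^0\ge\chi$.

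The main obstacle is the second part, namely correctly pinning down the positivity of $E_{|\P^3}$. The only place requiring care is verifying that the restriction of the ample class $-K_X$ to $\P(E_{|\P^3})$ produces exactly the class computing the threshold $k\ge 3$; this rests on the two identifications $\cO_\G(1)_{|\P^3}=\cO_{\P^3}(1)$ and $\P(E_{|\P^3})=\pi^{-1}(\P^3)$, together with the fact that $(5-e)/2$ rounds up to $3$ for both normalizations $e=0$ and $e=-1$. Once these are in place, both inequalities are immediate.
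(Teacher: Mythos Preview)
Your argument is correct and follows the same route as the paper: both apply Le Potier's vanishing to $E(j)=\cO(K_{\G})\otimes E(j+5)$ (resp.\ $E_{|\P^3}(j)=\cO(K_{\P^3})\otimes E_{|\P^3}(j+4)$), using that $E(3)$ (resp.\ $E_{|\P^3}(3)$) is ample, to kill cohomology in degrees $\ge 2$. Your treatment of the restriction is in fact more careful than the paper's, which simply calls that case ``analogous''; your verification via restricting $-K_X$ to $\pi^{-1}(\P^3)$ is exactly the right justification.
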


\begin{proof} Notice that $E$ is Fano, hence $E(3)$ is ample and applying Le Potier vanishing Theorem \cite[II,~Thm.~7.3.5]{L} we get that
$$\chi(E(j))=h^0(\G(1,4),E(j))-h^1(\G(1,4),E(j)), \mbox{ for }j\geq -2.$$
The second part of the statement is analogous.
\end{proof}

\section{Proof of Theorem \ref{thm:main}}\label{sec:proof}

{\noindent \bf Step 1: Reduction of the set of possible Chern classes of $E$}

Let us denote $m=(5-e)/2$. By hypothesis the $\Q$-twist $E(m)$ is ample and,
in particular the restrictions of the $\Q$-bundle $E(m)$ to a $\P^3$ in the class $\Omega(0,4)$ and to a $\P^2$ in the class $\Omega(1,2)$ have positive Chern classes (see \cite{BG}), i.e. $a+me+m^2>0$ and $b+me+m^2>0$. Therefore we get $a,b\geq -6$ if
$e=0$ and $a,b>-6$ if $e=-1$.\\
By the positivity of the third Schur polynomial ($c_1^2-2c_2$, see. \cite[8.3]{L}) of $E(m)$ against the cycles $\Omega(0,4)$ and $\Omega(1,3)$ we get $a \le 6, b \le 12-a$ if $e=0$ and $a \le 6, b \le 13-a$ if $e=-1$.\\
Now (with the help of the Maple Schubert package) we use the Riemann-Roch formula to compute $\chi(E(k))$, $k\in\Z$, for all possible values left of $a$ and $b$, and we exclude those for which the result is not an integer for some $k$. This is the analogue of the Schwarzenberger's conditions on the projective space.
We are left with the following cases:

$$
\begin{array}{|c|c|c|}
\hline
& e=0 & \;e=-1\; \\\hline
(a,b) & (-4,-4)& \;(6,6) \;\\\hline
(a,b) &(-4,12)& \;(-2,-2) \;\\\hline
(a,b) &(-1,-1)& \;(-2,7) \;\\\hline
(a,b) &(-1,3)& \;(0,1) \;\\\hline
(a,b) &(0,0)& \;(0,0) \;\\\hline
\end{array}
$$\par
\smallskip
Furthermore, for $a=b=6$ the Riemann-Roch formula gives us $\chi(E(5))=-935$. On the other hand, Griffiths vanishing Theorem \cite[II,~Thm.~7.3.1]{L} provides $H^i(\G(1,4),E(5))=0$ for $i>0$, a contradiction.

\medskip

{\noindent \bf Step 2: Characterizing the case $E\cong\cO(-2)\oplus\cO(2)$}

By Lemma \ref{lemma:-2}, if $H^0(\P^3, E_{|\P^3}(-2))\neq 0$ for the general $\P^3$ in the class $\Omega(0,4)$ then $E \simeq \cO(-2) \oplus \cO(2)$, and in particular $a=b=-4$. Conversely, if $(a,b)=(-4,-4)$ then, by Riemann-Roch and Lemma \ref{lemma:LP}, we get $H^0(\G(1,4), E(-2)) > 0$ and $E \simeq \cO(-2) \oplus \cO(2)$ by Lemma \ref{lemma:-2}.

 As a consequence, we may assume, in the remaining cases, that
 \begin{equation}\label{vanishing-2} H^0(\P^3, E_{|\P^3}(-2))=0, \textrm{ for the general } \P^3 \textrm{ in }  \Omega(0,4).\end{equation}

{\noindent \bf Step 3: The case $a\neq 0$}

In this case Riemann-Roch formula for $E_{|\P^3}$ provides:

$$
\begin{array}{|c|c|}
\hline
(e,a,b) & \;\chi(E_{|\P^3}(-1))\; \\\hline
(0,-4,12)& \;4 \;\\\hline
(0,-1,-1)& \;1 \;\\\hline
(0,-1,3)& \;1 \;\\\hline
(-1,-2,-2)& \;1 \;\\\hline
(-1,-2,7)& \;1 \;\\\hline
\end{array}
$$
and, in particular, by Lemma \ref{lemma:LP}, $H^0(\P^3, E_{|\P^3}(-1))\neq 0$  for the general $\P^3$ in the cohomology class $\Omega(0,4)$. Then, using the assumption (\ref{vanishing-2}) together with Corollary \ref{cor:posc3}, we obtain that the case $(0,-4,12)$ is not possible and that $E$ splits in the rest of the cases.
It follows that the only possibilities are, either:
\begin{itemize}
\item $(e,a,b)=(0,-1,-1)$ and $E\cong\cO(-1)\oplus\cO(1)$, or
\item $(e,a,b)=(-1,-2,-2)$ and $E\cong\cO(-2)\oplus\cO(1)$.
\end{itemize}

\noindent{\bf Step 4: The case $a=0$}

Note first that in this case $H^0(\P^3,E_{|\P^3}(-2))=0$ for every $\P^3$ in $\Omega(0,4)$. In fact, if the restriction of $E(-2)$ to some $\P^3$ had sections, then, arguing as in Lemma \ref{lemma:-2}, $E_{|\P^3}$ would split as $\cO(-2)\oplus\cO(2)$ and Lemma \ref{lem:splitP3} would imply that $a=-4$.

We claim that, moreover, $H^0(\P^3,E_{|\P^3}(-1))=0$ for every $\P^3$. Assume that this is not the case for some $\P^3$ and let $Z$ be the set of zeroes of a non-zero global section $\sigma$ of $E_{|\P^3}(-1)$, which is, by the vanishing of $H^0(\P^3,E_{|\P^3}(-2))$, a curve of degree $c_2(E_{|\P^3}(-1))$.  If $e=0$, then $Z$ is a line, contradicting the adjunction formula $K_Z = (K_{\P^3} + c_1(E(-1)))_{|Z}=(\cO_{\P^3}(-6))_{|Z}.$
If else $e=-1$, let $\ell$ be a line meeting $Z$. The possible splittings of $E(-1)$ on $\ell$ are $(-3,0)$ or $(-2,-1)$ (see (\ref{splittingtypes})), so $\sigma$ cannot vanish on any point of $\ell$, a contradiction.

Finally the Riemann-Roch formula, together with Lemma \ref{lemma:LP}, tells us that $H^0(\P^3, E_{|\P^3})\neq 0$ for every $\P^3$ in $\Omega(0,4)$, hence, using Lemma  \ref{lem:splitP3}, $E_{|\P^3}$ splits as $\cO\oplus\cO$ or $\cO(-1)\oplus\cO$. In particular $E$ is uniform, necessarily of type $(0,0)$ or $(0,-1)$, and this allows us to conclude (using  \cite[Th\'eor\`eme 1]{G}, \cite[Theorem 4.1]{MOS}) that $E$ is isomorphic either to $\cO^{\oplus 2}$, or to $\cO\oplus\cO(-1)$, or to the universal bundle $\cQ$. This finishes the proof.
\qed

\bibliographystyle{amsalpha}

\end{document}